\newcommand{\RR}{\ensuremath{\mathbb{R}}}
\def\leng{{\rm length}}
\def\diam{{\rm diam}}
\def\Cl{{\rm Cl}}
\newcommand{\Can}{\ensuremath{{}^\omega 2}}
\newcommand{\Bai}{\ensuremath{{}^\omega \omega}}
\newcommand{\ZFC}{\ensuremath{{\rm \mathsf{ZFC}}}}
\newcommand{\ZF}{\ensuremath{{\rm \mathsf{ZF}}}}
\newcommand{\DCR}{\ensuremath{{\rm \mathsf{DC}(\mathbb{R})}}}
\newcommand{\conc}{{}^\smallfrown}
\newcommand{\F}{\mathcal{F}}
\newcommand{\B}{\mathcal{B}}
\newcommand{\I}{\mathcal{I}}
\newcommand{\bSigma}{\mathbf{\Sigma}}
\newcommand{\bPi}{\mathbf{\Pi}}
\newcommand{\bDelta}{\mathbf{\Delta}}
\newcommand{\restr}[2]{#1 \restriction #2}
\newcommand{\seq}[2]{\langle #1 \mid  #2 \rangle}
\newtheorem{theorem}{Theorem}[section]
\newtheorem{lemma}[theorem]{Lemma}
\theoremstyle{definition}
\newtheorem*{claim}{Claim}
\newtheorem{defin}{Definition}
\title{A NEW PROOF OF A THEOREM OF JAYNE AND ROGERS}
\author{Luca Motto Ros\thanks{The author's research was supported by FWF (Austrian Research Fund) Grant P 19898-N18.},\
Kurt G\"odel Research Center for Mathematical Logic, University of Vienna, Austria
(e-mail:~luca.mottoros@libero.it)
\and
Brian Semmes,\
Institute for Logic, Language and Computation, University of Amsterdam, Holland
(e-mail:~B.T.Semmes@uva.nl)}
\date{\today}
\keywords{Baire class 1 functions, piecewise continuous functions, 
first level Borel functions, $\bDelta^0_2$-functions}
\begin{document}

\maketitle

\begin{abstract}
We give a new simpler proof of a theorem of Jayne and Rogers.
\end{abstract}

\section{Introduction}

In this paper we will give a new proof of a Jayne--Rogers theorem.
First recall from \cite{jaynerogers} the following definitions:

\begin{defin}
  Let $X,Y$ be metric spaces. A function $f \colon X \to Y$ is said to be 
\emph{$\bDelta^0_2$-function} if $f^{-1}(S) \in \bSigma^0_2$ for every 
$S \in \bSigma^0_2$ (equivalently, $f^{-1}(U) \in \bDelta^0_2$ for every 
open $U \subseteq Y$). Sometimes these functions are also called
\emph{first level Borel functions} (see \cite{jaynerogers}).

The function $f$ is said to be \emph{piecewise continuous} if $X$
 can be expressed as the union of an increasing sequence $X_0, X_1,
 \dotsc$ of closed sets such that $\restr{f}{X_n}$ is continuous for
 every $n \in \omega$.
\end{defin}

Obviously, if $f \colon X \to Y$ is piecewise continuous and $X' \subseteq X$ then $\restr{f}{X'}$ is piecewise continuous as well.
Observe also that $f$ is piecewise continuous if and only if there is
 a $\bDelta^0_2$-partition $\seq{D_n}{n \in \omega}$ of $X$
 such that $\restr{f}{D_n}$ is continuous for every $n \in
 \omega$. For one direction, if $f$ is piecewise continuous then putting
$D_0 = X_0$ and
 $D_{n+1} = X_{n+1} \setminus X_n$ we have the desired partition.
 Conversely, let $P_{m,n} \in \bPi^0_1$ be such that $D_n =
 \bigcup_{m \in \omega} P_{m,n}$ and $P_{m,n} \subseteq P_{m',n}$
 for every $m \leq m'$ and $n \in \omega$, and let $X_n =
 \bigcup_{i \leq n} P_{n,i}$. It is easy to check that the $X_n$
 are increasing and closed, and that $\restr{f}{X_n}$ is continuous
 (since $P_{n,i} \cap P_{n,j} = \emptyset$ whenever $i \neq j$). 
 In the rest of this paper, when we will refer to some piecewise
 continuous function we will generally have in mind a function
 with this ``partition'' property. Finally, a third 
equivalent and useful definition is that $X$ can be covered by a (not necessarily increasing) 
countable family  
$P_0, P_1, \dotsc$ of closed sets such that $\restr{f}{P_n}$ is
continuous for every $n \in \omega$.

\begin{defin}
A set $S$ in a metric space is said to be Souslin-$\mathscr{F}$ set
if it belongs to $\mathcal{A}\bPi^0_1$, where $\mathcal{A}$ is the 
usual Souslin operation (see \cite[Definition 25.4]{kechris}).

A metric space $X$ is said to be an \emph{absolute 
Souslin-$\mathscr{F}$ set} if $X$ is a Souslin-$\mathscr{F}$ set 
in the completion of $X$ under its metric.
\end{defin}

Observe that if $X$ is separable then it is an absolute 
Souslin-$\mathscr{F}$ set if and only if it is Souslin, that
is if and only if it is the continuous image of the Baire space
$\Bai$.

Now we are ready to give the statement of the Jayne--Rogers theorem.

\begin{theorem}[Jayne--Rogers]\label{theorJRoriginal}
If $X$ is an absolute Souslin-$\mathscr{F}$ set, then $f\colon X 
\to Y$ is a $\bDelta^0_2$-function if and only if it is piecewise
 continuous.
\end{theorem}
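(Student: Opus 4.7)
The easy direction (piecewise continuous $\Rightarrow$ $\bDelta^0_2$) is a short verification using the partition definition: if $\seq{D_n}{n \in \omega}$ is a $\bDelta^0_2$-partition of $X$ with $\restr{f}{D_n}$ continuous, then for any open $U \subseteq Y$ one can pick open sets $V_n \subseteq X$ with $f^{-1}(U) \cap D_n = V_n \cap D_n$, whence $f^{-1}(U) = \bigcup_n (V_n \cap D_n)$ is a countable union of $\bDelta^0_2$-sets and hence lies in $\bSigma^0_2$. Running the same argument with $Y \setminus U$ in place of $U$ gives the complement of $f^{-1}(U)$ also in $\bSigma^0_2$, so $f^{-1}(U) \in \bDelta^0_2$.

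For the harder converse my plan is to use a game-theoretic framework of the sort developed by Semmes. Using the absolute Souslin-$\F$ hypothesis, I would first code $X$ as a continuous image of a closed subset of $\Bai$ and transfer the problem to Baire space, noting that piecewise continuity behaves well under such codings. I then attach to $f$ a \emph{finite-eraser game} $G_f$: Player~I builds a point $x \in \Bai$ round by round, while Player~II builds an approximation to $y \in Y$, with the rules that II may pass freely in any round and may erase everything played so far in only finitely many rounds. Player~II wins a run if either I's play fails to code a point of $X$ or the final $y$ equals $f(x)$.

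The argument then has three parts. \emph{Part (a):} from any winning strategy $\sigma$ for II one extracts a $\bDelta^0_2$-partition of $X$ by letting $D_n$ be the set of $x$ against which $\sigma$ erases exactly $n$ times; each $D_n$ is $\bDelta^0_2$ (``erases at least $n$ times'' is $\bSigma^0_2$ in $x$), and after the last erasure II's output becomes a continuous function of I's input, so $\restr{f}{D_n}$ is continuous. \emph{Part (b):} the hypothesis that $f$ is $\bDelta^0_2$ rules out a winning strategy for Player~I; from such a strategy one must read off a $\bSigma^0_2$-set $S \subseteq Y$ with $f^{-1}(S) \notin \bSigma^0_2$, contradicting the hypothesis. \emph{Part (c):} the payoff set of $G_f$ is of low Borel complexity, so $G_f$ is determined in $\ZFC$, and combining (a) with (b) yields that $f$ is piecewise continuous.

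The main obstacle will be Part (b): translating the abstract condition ``$f$ is $\bDelta^0_2$'' into the concrete non-existence of a winning strategy for~I. This requires a careful analysis of the tree of plays consistent with an I-strategy, so as to exhibit inside $Y$ a $\bSigma^0_2$-set whose $f$-preimage is forced outside $\bSigma^0_2$ by the strategy's ability to keep II from stabilising. Setting up the game so that its rules are exactly strong enough for (a) and exactly weak enough for (b) — with the finite-eraser rule as the crucial calibration — is the delicate point on which a ``new simpler proof'' must rest.
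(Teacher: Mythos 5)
Your easy direction is fine. The trouble is the converse: what you have written is a plan, not a proof, and the unproved Part~(b) of that plan is exactly where the entire content of the Jayne--Rogers theorem lives. The claim ``from a winning strategy for Player~I in the finite-eraser (backtrack) game one reads off a $\bSigma^0_2$ set whose $f$-preimage is not $\bSigma^0_2$'' is not a routine translation: a strategy for~I only witnesses that II cannot stabilise, and converting that combinatorial information into a single open set $\hat U \subseteq Y$ with $f^{-1}(\hat U)$ properly $\bSigma^0_2$-hard requires an inductive localisation argument. Concretely, one must find, inside smaller and smaller neighbourhoods, points at which $f$ stays non-piecewise-continuous even after discarding the preimages of finitely many previously chosen target balls, and one must control how many such balls can ``reduce'' a given point (this is the content of Lemmas~\ref{lemmacrucial} and~\ref{lemmagood}); only then can one build a Cantor scheme reducing the $\bSigma^0_2$-complete set $\{z \in \Can \mid z(n) \text{ eventually equals } 0\}$ to $f^{-1}(\hat U)$. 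You have correctly identified this as the delicate point, but identifying it is not supplying it, and nothing in the proposal gets you past it.

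There are two further gaps even granting Part~(b). First, the transfer to Baire space: if $\pi \colon C \to X$ is a continuous surjection from a closed $C \subseteq \Bai$, piecewise continuity of $f \circ \pi$ does not obviously descend to $f$ --- the images $\pi(C_n)$ of the closed pieces are merely analytic and $\restr{f}{\pi(C_n)}$ need not be continuous, so ``piecewise continuity behaves well under such codings'' needs a real argument. The paper avoids this entirely: it reduces to a completely metrizable, zero-dimensional $\tilde X \subseteq X$ via the $\sigma$-ideal $\I_f$, Lemma~\ref{lemmadiscrete} and the dichotomy of Holick\'y--Zaj\'i\v{c}ek--Zelen\'y, and to Baire class~$1$ via Hansell's theorem, and only then runs the core construction (Theorem~\ref{theorJR}). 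Second, a game in which II builds a point of $Y$ in countably many moves on $\omega$ presupposes $Y$ separable (and determinacy presupposes a Borel payoff), whereas the theorem is stated for arbitrary metric $Y$; the paper's use of discrete families and paracompactness is precisely what handles non-separability. So as it stands the proposal would, at best, yield the separable case, conditional on the one step it declines to prove.
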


According to the authors of \cite{jaynerogers}, their proof ``even
 in the case when $X$ and $Y$ are separable, is complicated''. 
Sixteen years later, S{\l}awomir Solecki
 provided in \cite{solecki} a new proof of Theorem \ref{theorJRoriginal}
 in the case when $X$ and $Y$ are separable and $X$ is Souslin 
(in fact he proved a much stronger result which refines Theorem 
\ref{theorJRoriginal}), but even in that case the proof was quite
 complicated. Our goal is to provide a simpler proof of Theorem 
\ref{theorJRoriginal}. Our proof is divided into two steps: first we
will prove the nontrivial direction of Theorem \ref{theorJRoriginal}
with the auxiliary assumptions that $X$ is completely metrizable and
$f$ is of Baire class $1$ (see Theorem \ref{theorJR}), and then we
will use a combination of several well-known results to prove Theorem
\ref{theorJRoriginal} as a corollary of Theorem \ref{theorJR}. The
authors would like to thank the anonymous referee for
suggesting a way for removing the condition of separability on the
spaces involved.\\

We will assume $\ZF+\DCR$ throughout the paper (note that the 
Jayne--Rogers' and Solecki's proofs are carried out in $\ZFC$, 
but by a simple absoluteness argument the result must hold also 
in $\ZF+\DCR$). \emph{All spaces considered are metric}.
Our notation will be quite standard: the set of 
the natural numbers will be denoted by $\omega$, while if $X$ is 
any topological space and $A$ is a subset of $X$ we will denote 
the closure of $A$ with $\Cl(A)$. The set of all binary sequences
 of finite length will be denoted by  ${}^{<\omega}2$, and $\Can$ will
 denote the Cantor space.
A function $f \colon X \to Y$
will be said of \emph{Baire class $1$} if it is the pointwise limit
of a sequence of continuous functions $f_n \colon X \to Y$. 
Finally, if $(X,d)$ is any metric
 space, a set $U \subseteq X$ will be called \emph{basic open} if it
 is an open ball of $X$, i.e.\ if $U = \{ x \in X \mid d(x,x_0) < r\}$
 where $x_0 \in X$ and $r \in \RR^+$. For all the other undefined 
symbols and notions we refer the reader to the standard monograph 
\cite{kechris}.

\section{The proof of the Jayne--Rogers theorem}\label{sectionJR}

The main result of this paper is the following theorem, from
which the Jayne--Rogers theorem will follow.

\begin{theorem}\label{theorJR}
Let $X$ and $Y$ be metric spaces such that the metric of  $X$ is complete, 
and let $f\colon X \to Y$ be of Baire
class $1$. If $f$ is a $\bDelta^0_2$-function then it is piecewise
continuous.
\end{theorem}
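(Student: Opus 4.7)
The plan is to construct a countable $\bDelta^0_2$-partition of $X$ on whose pieces $f$ is continuous; by the equivalent formulation of piecewise continuity recalled right after Definition~1.1, this yields the conclusion. I will do this via a Cantor--Bendixson-style transfinite derivative on closed subsets of $X$. Set $X^{(0)}=X$, let
\[
G_\alpha = \bigcup \bigl\{ U \subseteq X \text{ open} : \restr{f}{U \cap X^{(\alpha)}} \text{ is continuous}\bigr\},
\]
and $X^{(\alpha+1)} = X^{(\alpha)}\setminus G_\alpha$, with $X^{(\lambda)}=\bigcap_{\alpha<\lambda}X^{(\alpha)}$ at limits. By induction each $X^{(\alpha)}$ is closed, and since continuity is a local property, $\restr{f}{G_\alpha\cap X^{(\alpha)}}$ is continuous. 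Hence each piece $P_\alpha=X^{(\alpha)}\setminus X^{(\alpha+1)}$ is $\bDelta^0_2$ (open intersect closed) with continuous $f$-restriction; if the derivative terminates at a countable ordinal $\alpha_0$ with $X^{(\alpha_0)}=\emptyset$, the family $\{P_\alpha\}_{\alpha<\alpha_0}$ is the required partition.

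The crux is the \emph{key lemma}: whenever $F\subseteq X$ is non-empty closed, there exists a non-empty open $U\subseteq X$ with $\restr{f}{U\cap F}$ continuous; equivalently, the set $C$ of continuity points of $\restr{f}{F}$ has non-empty relative interior in $F$. This ensures strict descent of the derivative at every non-empty stage. The Baire class~1 hypothesis alone gives only that $C$ is a dense $\bPi^0_2$ subset of $F$, which may have empty interior, so the $\bDelta^0_2$ hypothesis is essentially used here. My plan is a fusion inside the complete metric space $F$: enumerate a countable base $\{V_n\}$ of $Y$ (after reducing to separable $Y$ if needed), note that by $\bDelta^0_2$ both $f^{-1}(V_n)\cap F$ and its $F$-complement are $\bSigma^0_2$ in $F$, and at stage $n$ apply Baire category to shrink to a non-empty relatively open subset on which $f$ is coherent with respect to $V_n$ (either $f(U)\subseteq V_n$ or $f(U)\cap V_n=\emptyset$). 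Simultaneous coherence for all $n$ implies continuity of the restriction, and careful bookkeeping of the shrinking should deliver an open neighborhood of continuity rather than a single continuity point.

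For \emph{termination}, in the separable case each strict descent $X^{(\alpha+1)}\subsetneq X^{(\alpha)}$ can be witnessed by a distinct basic open $B$ from a countable base of $X$ (with $B\cap X^{(\alpha)}\neq\emptyset$ and $B\cap X^{(\alpha+1)}=\emptyset$), bounding the length by $\aleph_0$; the non-separable case reduces to the separable one via the argument credited to the referee in the introduction. The main obstacle is the key lemma, and specifically its ``interior upgrade'': making the fusion terminate with an \emph{open} set of continuity is substantially more delicate than finding a mere continuity point (which Baire class~1 alone already provides), and this is where the $\bDelta^0_2$ hypothesis does its essential work.
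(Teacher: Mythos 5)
Your top-level scheme (transfinite derivative, $\bDelta^0_2$ pieces $P_\alpha=X^{(\alpha)}\cap G_\alpha$, termination in countably many steps via a countable base) is sound in the separable case, but it merely relocates the entire difficulty into your ``key lemma,'' and the argument you sketch for that lemma cannot prove it. Note first that the key lemma, quantified over all non-empty closed $F\subseteq X$, is \emph{equivalent} to the theorem for separable complete $X$: one direction is your derivative argument, and conversely if $\restr{f}{F}$ is piecewise continuous then by Baire category one of the closed pieces covering $F$ has non-empty relative interior, on which $f$ is continuous. So no reduction has been achieved. The fusion you propose then provably falls short: for each basic open $V_n\subseteq Y$ the set $E_n=\operatorname{int}_F(f^{-1}(V_n))\cup\operatorname{int}_F(F\setminus f^{-1}(V_n))$ is open and (by Baire, using that both sets are $\bSigma^0_2$ in $F$) dense in $F$, and $\bigcap_n E_n$ is exactly the set of relative continuity points of $\restr{f}{F}$. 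That is a dense $\bPi^0_2$ set and nothing more; a countable intersection of dense open sets has no reason to have non-empty interior (compare the function on $\RR$ equal to $1/(n+1)$ at the $n$-th rational and $0$ on the irrationals, whose continuity points form a dense $\bPi^0_2$ with empty interior). As you yourself observe, Baire class $1$ alone already yields a dense $\bPi^0_2$ of continuity points, so this route extracts no additional information from the $\bDelta^0_2$ hypothesis; ``careful bookkeeping of the shrinking'' cannot turn a decreasing sequence of open sets into an open intersection. The ``interior upgrade'' you flag as delicate is not a technical refinement of your fusion --- it is the whole theorem, and it needs a genuinely different mechanism. (Secondarily, your appeal to the referee's remark for the non-separable case does not apply: that remark concerns deducing Theorem \ref{theorJRoriginal} from Theorem \ref{theorJR}, not reducing Theorem \ref{theorJR} itself to separable $X$.)

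For contrast, the paper attacks the contrapositive globally rather than locally: assuming $f$ is not piecewise continuous, it isolates (Lemma \ref{lemmacrucial}) a point $x$ and a basic open $U\ni f(x)$ such that $f$ restricted to $U^f\cap V$ fails to be piecewise continuous for \emph{every} neighborhood $V$ of $x$ (``irreducibility outside $U$''), propagates this configuration through a weak Cantor scheme using the pigeonhole Lemma \ref{lemmagood} and the approximating continuous functions $f_m$, and thereby exhibits an open $\hat U\subseteq Y$ with $f^{-1}(\hat U)$ $\bSigma^0_2$-complete, contradicting the $\bDelta^0_2$ hypothesis. The essential use of $\bDelta^0_2$ is thus through the reduction property of $\bSigma^0_2$ and the completeness argument, not through a pointwise Baire-category analysis. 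If you want to salvage your derivative framework, you would need to prove the key lemma by some such reduction or completeness argument; as it stands, the proposal has a genuine gap at its central step.
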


Recall that if $f\colon X \to Y$ is of Baire class $1$ then it is also
\emph{$\bSigma^0_2$-measurable}, i.e.\ $f^{-1}(U) \in \bSigma^0_2$ for
every open set $U\subseteq Y$, but the converse in general fails. 
Nevertheless, if we require that $X$ is a zero-dimensional absolute
Souslin-$\mathscr{F}$ set then $f$ is 
of Baire class $1$ just in case it is $\bSigma^0_2$-measurable (see 
\cite[Theorem 8]{hansell}). Recall also that a family $\B$ of
subsets of $X$ is said to be \emph{discrete} if $X$ can be covered by
open sets each having a nonvoid intersection with at most one member of
$\B$ (in particular, the elements of $\B$ must be pairwise
disjoint). If $\B$ is a discrete family, then the following facts
easily follow from the definition:

\begin{itemize}
\item $\Cl(\B) = \{ \Cl(B) \mid B \in \B \}$ is discrete;
\item if $\B'$ is a family of subsets of $X$ and there is an injection
  $j \colon \B' \to \B$ such that $B' \subseteq j(B')$ for every $B'
\in \B'$ (e.g.\ if $\B' \subseteq \B$), then $\B'$ is discrete as well;
\item if each $B \in \B$ is closed then $\bigcup \B$ is also closed;
\item if $f \colon X \to Y$ is such that $\restr{f}{B}$ is continuous
  for every $B \in \B$, then $\restr{f}{\bigcup \B}$ is continuous. 
\end{itemize}

The following construction will be used a couple of times: let $g$ be any function defined on a metric space $Z$, let $\F_g$ be the collection of all closed sets $C$ of the completion of $Z$ such that $\restr{g}{(C \cap Z)}$ is continuous, and let $\I_g$\label{I} be the $\sigma$-ideal of the subsets of the fixed completion of $Z$ that can be covered by countably many elements of $\F_g$ (note in particular that $A \subseteq Z$ belongs to $\I_g$ if and only if $\restr{g}{A}$ is piecewise continuous).

\begin{lemma}\label{lemmadiscrete}
  $\I_g$ is closed under discrete unions.
\end{lemma}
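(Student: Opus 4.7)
The plan is straightforward: given a discrete family $\B \subseteq \I_g$, I would construct an explicit countable cover of $\bigcup \B$ by elements of $\F_g$. For each $B \in \B$ fix a countable cover $\{C_{B,n}\}_{n \in \omega} \subseteq \F_g$ with $B \subseteq \bigcup_n C_{B,n}$, and set
\[ D_n := \bigcup_{B \in \B} \bigl(\Cl(B) \cap C_{B,n}\bigr). \]
The covering property $\bigcup \B \subseteq \bigcup_n D_n$ is immediate from $B \subseteq \Cl(B)$ and $B \subseteq \bigcup_n C_{B,n}$, so everything reduces to verifying that each $D_n$ belongs to $\F_g$, i.e.\ that $D_n$ is closed in the completion of $Z$ and that $\restr{g}{(D_n \cap Z)}$ is continuous.

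Both verifications are obtained by threading the four bullet properties of discrete families listed just before the lemma, in order. First, the first bullet gives that $\Cl(\B) = \{\Cl(B) \mid B \in \B\}$ is discrete. The second bullet, applied via the obvious injection $\Cl(B) \cap C_{B,n} \mapsto \Cl(B)$, shows that $\{\Cl(B) \cap C_{B,n} \mid B \in \B\}$ is discrete as well. Since each $\Cl(B) \cap C_{B,n}$ is closed (intersection of two closed sets in the completion), the third bullet gives that $D_n$ is closed.

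For the continuity I would restrict to $Z$. The family $\{(\Cl(B) \cap C_{B,n}) \cap Z \mid B \in \B\}$ is discrete in $Z$ (intersect with $Z$ the open cover witnessing discreteness in the completion), and on each member $g$ is continuous because each is contained in $C_{B,n} \cap Z$, on which $g$ is continuous by the choice $C_{B,n} \in \F_g$. The fourth bullet then yields continuity of $g$ on $D_n \cap Z = \bigcup_{B \in \B}(\Cl(B) \cap C_{B,n} \cap Z)$, so $D_n \in \F_g$, as required.

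I do not foresee a substantive obstacle: the argument is essentially a bookkeeping exercise that chains the four bullet facts together. The only mild subtlety is keeping straight what is closed in the completion versus what lives in $Z$, and checking that discreteness transfers between the two contexts, which is automatic.
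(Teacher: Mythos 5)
Your proof is correct and is essentially the paper's own argument: your sets $\Cl(B) \cap C_{B,n}$ are exactly the paper's covers $F^B_n$ after its ``without loss of generality $F^B_n \subseteq \Cl(B)$'' step, and both proofs then chain the same bullet facts (discreteness of $\Cl(\B)$, the injection into it, closedness of discrete unions of closed sets, and continuity on discrete unions) to conclude that each $D_n$ lies in $\F_g$. No substantive difference.
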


\begin{proof}
Let $\B$ a discrete family of subsets of $Z$ and assume $\B \subseteq
\I_g$. Let $F^B_n$ (for $B \in \B$ and $n \in \omega$) be closed sets
such that $B \subseteq \bigcup_n F^B_n$ and $\restr{g}{F^B_n}$ is
continuous. We can assume without loss of generality that $F^B_n
\subseteq \Cl(B)$ (if not simply replace $F^B_n$ by $F^B_n \cap
\Cl(B)$). Put $\F_n = \{ F^B_n \mid B \in \B\}$. Since the function $j
\colon \F_n \to \Cl(\B)$ which maps $F^B_n$ to $\Cl(B)$ is injective, by
the facts aboute discrete families mentioned above we get that
$\Cl(\B)$, and hence also each $\F_n$, must be discrete: but this
implies that $F_n = \bigcup \F_n$ is closed and $\restr{g}{F_n}$ is
continuous. Therefore $\bigcup \B \in \I_g$ because $\bigcup \B
\subseteq \bigcup_{B \in \B} \left( \bigcup_n F^B_n \right) =
\bigcup_n \left( \bigcup_{B \in \B} F^B_n \right) = \bigcup_n F_n$.
\end{proof}

\begin{proof}[Proof of Theorem \ref{theorJRoriginal}]
One direction is trivial. For the other direction, assume toward a
contradiction 
that $f$ is a $\bDelta^0_2$-function but
\emph{not} piecewise continuous. Let $\I = \I_f$ be defined as before (with $Z = X$).
By  \cite[Proposition 3.5]{zeleny}, Lemma \ref{lemmadiscrete} implies that $\I$ is
locally determined, and since it is trivially $\boldsymbol{F}_\sigma$
supported we can apply \cite[Theorem 1.3]{zeleny}: therefore, either
$X \in \I$ or there is $\tilde{X} \subseteq X$ such that $\tilde{X}$ is a  $\bPi^0_2$-subset of the completion of $X$
(hence a completely metrizable space) and $\tilde{X} \notin \I$. Moreover, inspecting
the proof of  \cite[Theorem 1.3]{zeleny} it is easy to check that the $\tilde{X}$
obtained in the second case is also zero-dimensional.
Since the first alternative easily implies that $f$ is piecewise continuous,
we can assume that the second alternative holds and therefore that
$f' = \restr{f}{\tilde{X}}$ is not piecewise continuous. Note that we can assume
also that $f'$ is of Baire class $1$ (otherwise, by \cite[Theorem 8]{hansell} we would have that $f'$ is not 
even $\bSigma^0_2$-measurable and hence not a $\bDelta^0_2$-function), 
and therefore we can apply Theorem \ref{theorJR} to $f'$; this gives
the desired contradiction.
\end{proof}

The strategy for the proof of Theorem \ref{theorJR} will be as follows: we will assume that $f \colon X \to Y$ is of Baire class $1$ (hence also $\bSigma^0_2$-measurable) but not piecewise continuous, and then we will prove that $f$ can not be a $\bDelta^0_2$-function by constructing an open set $\hat{U} \subseteq Y$\label{Ug} such that $f^{-1}(\hat{U})$ is a $\bSigma^0_2$-complete set. To prove that $f^{-1}(\hat{U})$ is $\bSigma^0_2$-complete, we will construct (together with $\hat{U}$) a continuous reduction from the well-known $\bSigma^0_2$-complete set 
\[ S = \{ z \in \Can \mid z(n) \text{ eventually equals }0\} = \{ z \in \Can \mid \exists i \forall j \geq i (z(j)=0)\} \]
 to $f^{-1}(\hat{U})$, i.e.\ a continuous function $g \colon \Can \to X$ such that for all $z \in \Can$
\[ z \in S \iff g(z) \in f^{-1}(\hat{U}).\]
The construction of $\hat{U}$ and $g$ will be carried out by inductively localizing the property of not being piecewise continuous of $f$ to smaller and smaller subsets of $X$.
However, 
before proving Theorem \ref{theorJR} we need a
 couple of technical lemmas.
For the next few results,
 \emph{$X'$ will be an arbitrary subset of  $X$}. 
Given $A,B \subseteq Y$ we will say that $A$ and $B$ are \emph{strongly 
disjoint} if $\Cl(A) \cap \Cl(B) = \emptyset$. Moreover 
if $h \colon X' \to Y$ is any function we put $A^h = h^{-1}(Y \setminus 
\Cl(A))$. Note that for every $A,B \subseteq Y$ one has $(A \cup B)^h
= A^h \cap B^h$. If $h$ is $\bSigma^0_2$-measurable and 
$A,B \subseteq Y$ are strongly disjoint, then we have that if 
$\restr{h}{A^h}$ and $\restr{h}{B^h}$ are both piecewise continuous then 
the whole $h$ is piecewise
continuous. In fact, $A^h$ and $B^h$ is a finite 
$\bSigma^0_2$-covering of $X'$ (by the strongly 
disjointness of $A$ and $B$), which by the reduction property of 
$\bSigma^0_2$ can be refined to a $\bDelta^0_2$-partition $\langle D_0, 
D_1 \rangle$ of $X'$ such that $D_0 \subseteq A^h$, 
$D_1 \subseteq B^h$, and hence both $\restr{h}{D_0}$ and 
$\restr{h}{D_1}$ are piecewise continuous. But if $h'\colon X' \to Y$ is 
such that for some $\bDelta^0_2$-partition $\seq{D'_n}{n \in \omega}$ of 
$X'$ we have that $\restr{h'}{D'_n}$ is piecewise continuous  for every 
$n$, then $h'$ is piecewise continuous on the whole $X'$: therefore $h$ 
is piecewise continuous as well.

Now let $h \colon X' \to Y$ be a $\bSigma^0_2$-measurable function, $x \in 
X'$, and $A$ be any subset of $Y$. We say that $x$ is \emph{$h$-irreducible outside $A$} 
if for every open neighborhood $V \subseteq X'$ of $x$ the function 
$\restr{h}{A^h \cap V}$ is not piecewise continuous, otherwise we 
say that $x$ is \emph{$h$-reducible outside $A$}. In our proofs the set $A$ will be often of the form $A = U_0 \cup \dotsc \cup U_n$ with $U_0, \dotsc, U_n$ a sequence of pairwise strongly disjoint open sets. Notice that if $x$ is $h$-irreducible outside $A$ then $x \in \Cl(A^h)$, as otherwise $A^h \cap V = \emptyset$ for some open neighborhood $V$ of $x$ and therefore $\restr{h}{A^h \cap V}$ would be trivially (piecewise) continuous. Moreover, if there are $x$ and $A$ such that $x$ is $h$-irreducible outside $A$ then clearly $h$ can not be piecewise continuous. Finally, it is easy to check that if $x$ is $h$-irreducible outside $A$ and $A' \subseteq A$ 
then $x$ is also $h$-irreducible outside $A'$, and that if $X'' 
\subseteq X'$ and  $x \in X''$ is $h'$-irreducible outside $A$ (where $h'=\restr{h}{X''}$) 
then $x$ is also $h$-irreducible outside $A$.

\begin{lemma}\label{lemmacrucial}
Suppose $h\colon X' \to Y$ is a $\bSigma^0_2$-measurable function and 
$U_0, \dotsc, U_n \subseteq Y$ are basic open sets of $Y$ such that 
${\rm range}(h) \cap \Cl(U_i) = \emptyset$ for every $i \leq n$.
Then $h$ is \emph{not} piecewise continuous if and only if {\rm ($*$)} 
there is an $x \in X'$ and a basic open set $U \subseteq Y$ strongly 
disjoint from $U_0, \dotsc, U_n$ such that $h(x) \in U$ and $x$ is $h$-irreducible outside
$U$.
\end{lemma}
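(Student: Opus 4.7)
The direction $(\Leftarrow)$ is immediate from the observation recorded just above the lemma: if some $x \in X'$ is $h$-irreducible outside any set $A$, then $h$ cannot be piecewise continuous, so the existence of $x, U$ as in $(*)$ directly yields the failure of piecewise continuity.

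For $(\Rightarrow)$ I would argue by contraposition. Suppose that for every $x \in X'$ and every basic open $U \ni h(x)$ strongly disjoint from $U_0, \dotsc, U_n$ the point $x$ is $h$-reducible outside $U$; the plan is to inductively peel off a sequence $G_0, G_1, \dotsc \in \I_h$ in such a way that on the remainder $X'_\infty = X' \setminus \bigcup_m G_m$ the function $h$ turns out to be continuous (so $X'_\infty \in \I_h$ as well), giving $X' \in \I_h$, i.e.\ $h$ piecewise continuous, the desired contradiction.

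At stage $m$, set $X'_m = X' \setminus \bigcup_{i<m} G_i$; for each $x \in X'_m$ pick a basic open ball $U^{(m)}_x \ni h(x)$ of radius less than $\min(d(h(x), \bigcup_i \Cl(U_i))/2,\; 2^{-m})$, so that $U^{(m)}_x$ is strongly disjoint from $U_0,\dotsc,U_n$. The hypothesis (easily checked to transfer to the subspace $X'_m$, since restricting a piecewise continuous function keeps it piecewise continuous) yields an open neighborhood $V^{(m)}_x$ of $x$ in $X'_m$ with $\restr{h}{V^{(m)}_x \cap (U^{(m)}_x)^h}$ piecewise continuous. Invoking A.~H.~Stone's theorem that every open cover of a metric space admits a $\sigma$-discrete open refinement, refine $\{V^{(m)}_x\}_{x \in X'_m}$ to $\{W^{(m)}_{\alpha,k}\}_{\alpha,k}$ with corresponding $U^{(m)}_{\alpha,k}$, and set
\[ G_m = \bigcup_{k}\bigcup_{\alpha}\bigl(W^{(m)}_{\alpha,k} \cap (U^{(m)}_{\alpha,k})^h\bigr). \]
For each fixed $k$ the family $\{W^{(m)}_{\alpha,k} \cap (U^{(m)}_{\alpha,k})^h\}_\alpha$ is discrete (its members are subsets of the corresponding members of the discrete $\{W^{(m)}_{\alpha,k}\}_\alpha$), so Lemma \ref{lemmadiscrete} applied to $\restr{h}{X'_m}$ places the $\alpha$-union in $\I_{\restr{h}{X'_m}}$; summing over $k$ gives $G_m \in \I_{\restr{h}{X'_m}}$, whence $\restr{h}{G_m}$ is piecewise continuous and so $G_m \in \I_h$ by the equivalence ``$A \in \I_g$ iff $\restr{g}{A}$ is piecewise continuous'' recorded in the definition of $\I_g$.

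To see that $\restr{h}{X'_\infty}$ is continuous, fix $x \in X'_\infty$ and $\epsilon > 0$, pick $m$ with $2^{-m+1} < \epsilon$, and select $(\alpha,k)$ with $x \in W^{(m)}_{\alpha,k}$ (which exists since $\{W^{(m)}_{\alpha,k}\}$ covers $X'_m$). Any $y \in W^{(m)}_{\alpha,k} \cap X'_\infty$ lies outside $G_m$, so $h(y) \in \Cl(U^{(m)}_{\alpha,k})$, a ball of diameter $< 2^{-m+1} < \epsilon$; this forces $d(h(x),h(y)) < \epsilon$, and since $W^{(m)}_{\alpha,k} \cap X'_\infty$ is open in $X'_\infty$ the claim follows. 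I expect the main obstacle to be the bookkeeping for the inductive $\sigma$-discrete refinement in a possibly non-separable metric space, together with the transfer between the sub-ambient spaces $X'_m$ and the ambient ideal $\I_h$; both are handled via Lemma \ref{lemmadiscrete} and the equivalence between $\I_g$-membership and piecewise continuity of $\restr{g}{\cdot}$.
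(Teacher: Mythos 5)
Your $(\Leftarrow)$ direction and the oscillation argument for continuity of $\restr{h}{X'_\infty}$ are fine, and the $\sigma$-discrete machinery is invoked correctly; the genuine gap is in the reassembly step at the end. From ``$\restr{h}{G_m}$ is piecewise continuous for every $m$ and $\restr{h}{X'_\infty}$ is continuous'' you conclude $X' \in \I_h$ by appealing to the equivalence ``$A \in \I_g$ iff $\restr{g}{A}$ is piecewise continuous''. But the direction you need is only valid when $A$ is (the trace on $X'$ of) a $\bSigma^0_2$ set: piecewise continuity of $\restr{h}{A}$ produces pieces closed \emph{in $A$}, and these yield the closed-in-the-completion covering sets required by $\I_h$ only if $A$ itself is a countable union of such traces. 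For a concrete failure, let $h$ be the characteristic function of $\mathbb{Q}$ on $\RR$ and $A = \RR \setminus \mathbb{Q}$: then $\restr{h}{A}$ is constant, hence continuous, yet $A \notin \I_h$, since by Baire category any countable cover of the irrationals by closed subsets of $\RR$ contains a set with nonempty interior, on which $h$ is nowhere continuous. Your $X'_\infty$ is exactly of this dangerous type: each $G_m$ is a union of sets $W^{(m)}_{\alpha,k} \cap (U^{(m)}_{\alpha,k})^h$, and $(U^{(m)}_{\alpha,k})^h$ is only $\bSigma^0_2$, not open, so $X'_{m+1} = X'_m \setminus G_m$ is not closed (not even $\boldsymbol{F}_\sigma$) in $X'_m$, and after $\omega$ stages $X'_\infty$ is roughly $\bPi^0_2$ in $X'$. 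Continuity of $h$ on such a set does not supply a piece of the closed cover that the definition of piecewise continuity demands. The same objection applies one level down to your inference from $G_m \in \I_{\restr{h}{X'_m}}$ to $G_m \in \I_h$, since $X'_m$ is not $\boldsymbol{F}_\sigma$ in $X'$.

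This is precisely the difficulty the paper's proof is engineered to avoid: there one removes a single \emph{open} set $Q$ (the union of all members of a $\sigma$-discrete base that belong to $\I_h$), so the remainder $X' \setminus Q$ is closed in $X'$ and its continuity immediately furnishes the last closed piece. The price is that proving $\restr{h}{X' \setminus Q}$ continuous needs \emph{two} applications of the reducibility hypothesis --- at $x$ with $U'$ and at a nearby bad point $x'$ with a strongly disjoint $U''$ --- because $(U')^h \cup (U'')^h = X'$, so the reduction property of $\bSigma^0_2$ upgrades piecewise continuity on $(U')^h \cap V$ and $(U'')^h \cap V'$ to piecewise continuity on the \emph{open} set $V \cap V'$, forcing $V \cap V' \subseteq Q$ and contradicting $x' \notin Q$. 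If you want to keep your single-application, shrinking-radius scheme, you must arrange for the discarded sets to be open (or for the residual set to be $\bPi^0_1$ or $\boldsymbol{F}_\sigma$ in $X'$); as written, the argument does not establish that $h$ is piecewise continuous.
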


\begin{proof}
Put $C = \Cl(U_0) \cup \dotsc \cup \Cl(U_n)$.
We will prove that $h$ is piecewise continuous if and only if ($*$)
does not hold. If $h$ is piecewise continuous then the same must hold
for $\restr{h}{X''}$  
where $X''$ is any subset of $X'$, therefore one direction is trivial. For 
the other direction, assume toward a contradiction that ($*$) does not 
hold, i.e.\ for every $x \in X'$ and every open set $U \subseteq Y$ 
strongly disjoint from $C$ such that $h(x) \in U$ we have that $x$
is $h$-reducible outside $U$, that is there is some open 
neighborhood $V \subseteq X'$ of $x$ such that $\restr{h}{U^h \cap V}$ 
is piecewise continuous. Since $X$ is a metric space, and hence also paracompact, 
let $\B = \bigcup_n \B_n$ be a base for the topology of $X$ such that each $\B_n$ is
discrete (see \cite{engelkind}). Then let $Q_n$ be the union of the elements of $\B_n$ which belongs to $\I = \I_h$, so that each $Q_n$ belongs to $\I$ by Lemma \ref{lemmadiscrete}. Finally put $Q = \bigcup_n Q_n$ and notice that $\restr{h}{Q}$ is piecewise continuous since $Q \in \I$, and that $Q$ is open and contains as a subset each open set $W$ for which $\restr{h}{W}$ is piecewise continuous.
We claim that $\restr{h}{X' \setminus Q}$ is continuous (from this
easily follows that $h$ is piecewise continuous). Suppose otherwise, so that given 
any $x \in X' \setminus Q$ and any open set $U \subseteq Y$ such 
that $h(x) \in U$ there is no open neighborhood
$V$ of $x$ such that $h(V \cap (X' \setminus Q)) \subseteq U$.
Fix such an $x$ and $U$, and let $U'\subseteq Y$ be basic open, strongly disjoint from $C$, and such 
that $h(x) \in U'$ and $\Cl(U') \subseteq U$ ($U'$ exists since $Y$ is metric). Let $V 
\subseteq X'$ be given by the failure of ($*$) on the inputs $x$ and
$U'$: by our hypothesis there is
$x' 
\in V \cap (X' \setminus Q)$ such that $h(x') \notin \Cl(U')$, and clearly we can find a basic open
 $U'' \subseteq Y$  strongly 
disjoint from $U'$ and $C$, and such that $h(x') \in U''$. Let $V' 
\subseteq X'$ be the open set given by the failure of ($*$) on inputs $x'$ 
and $U''$. Since $V$ and $V'$ have been chosen in such a way that 
$\restr{h}{(U')^h \cap V}$ and $\restr{h}{(U'')^h \cap V'}$ are piecewise continuous,
and since $\{(U')^h \cap V,(U'')^h \cap V'\}$ is a $\bSigma^0_2$-covering
of $V \cap V'$, by the strong disjointness of $U'$ and $U''$ we must have that 
$\restr{h}{V \cap V'}$ is piecewise continuous,
 and therefore $V \cap V' \subseteq Q$: 
but this implies that $x' \in Q$, a contradiction! 
\end{proof}

\begin{lemma}\label{lemmagood}
Let $h \colon X' \to Y$ be a $\bSigma^0_2$-measurable function, $x  \in X'$, 
$A \subseteq Y$, and $U_0, \dotsc, U_n$ be a sequence of pairwise strongly disjoint 
open subsets of $Y$. If $x$ is $h$-irreducible outside $A$ 
then there is at most one 
$i\leq n$ such that $x$ is \emph{$h$-reducible} outside $A \cup U_i$.
\end{lemma}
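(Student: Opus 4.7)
The plan is to argue by contradiction and reduce the conclusion to the same strongly-disjoint covering argument that was already spelled out in the excerpt. Suppose there exist two distinct indices $i,j \leq n$ such that $x$ is $h$-reducible outside both $A \cup U_i$ and $A \cup U_j$. Pick witnessing open neighborhoods $V_i, V_j \subseteq X'$ of $x$, i.e.\ open neighborhoods such that $\restr{h}{(A \cup U_i)^h \cap V_i}$ and $\restr{h}{(A \cup U_j)^h \cap V_j}$ are both piecewise continuous, and set $V = V_i \cap V_j$, which is still an open neighborhood of $x$ in $X'$. The aim is to show that $\restr{h}{A^h \cap V}$ is piecewise continuous, contradicting the hypothesis that $x$ is $h$-irreducible outside $A$.

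The key combinatorial observation is that strong disjointness of $U_i$ and $U_j$ forces the two sets $(A \cup U_i)^h \cap V$ and $(A \cup U_j)^h \cap V$ to cover $A^h \cap V$. Indeed, using the identity $(A \cup U_i)^h = A^h \cap U_i^h$ (and likewise for $j$), if $y \in A^h \cap V$ fails to lie in $(A \cup U_i)^h$ then $h(y) \in \Cl(U_i)$; but $\Cl(U_i) \cap \Cl(U_j) = \emptyset$, so $h(y) \notin \Cl(U_j)$, whence $y \in U_j^h$ and therefore $y \in (A \cup U_j)^h \cap V$.

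Now I would run exactly the argument from the paragraph preceding Lemma \ref{lemmacrucial}: since $h$ is $\bSigma^0_2$-measurable, both $(A \cup U_i)^h \cap V$ and $(A \cup U_j)^h \cap V$ are $\bSigma^0_2$ subsets of $A^h \cap V$, so the reduction property of $\bSigma^0_2$ yields a $\bDelta^0_2$-partition $\langle D_0, D_1 \rangle$ of $A^h \cap V$ with $D_0 \subseteq (A \cup U_i)^h \cap V_i$ and $D_1 \subseteq (A \cup U_j)^h \cap V_j$. By the choice of $V_i$ and $V_j$, both $\restr{h}{D_0}$ and $\restr{h}{D_1}$ are piecewise continuous, and since $\langle D_0, D_1 \rangle$ is a finite $\bDelta^0_2$-partition of $A^h \cap V$ this makes $\restr{h}{A^h \cap V}$ piecewise continuous as well, contradicting the irreducibility of $x$ outside $A$.

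There is no real obstacle here beyond verifying the covering claim; the rest is a direct appeal to the strong-disjointness reasoning already isolated in the excerpt. I would present the proof very briefly, stating the covering fact and then citing the paragraph before Lemma \ref{lemmacrucial} to conclude piecewise continuity of $\restr{h}{A^h \cap V}$.
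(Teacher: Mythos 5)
Your proof is correct and follows essentially the same route as the paper: take the two witnessing neighborhoods, intersect them, and use the strong disjointness of $U_i$ and $U_j$ together with the reduction property of $\bSigma^0_2$ to conclude that $h$ restricted to $A^h$ intersected with the common neighborhood is piecewise continuous, contradicting the irreducibility of $x$ outside $A$. The paper's version is just terser, leaving the covering claim and the appeal to the earlier strongly-disjoint argument implicit, whereas you spell them out.
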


\begin{proof}
Assume that $i \leq n$ is such that $x$ is $h$-reducible outside  $A \cup U_i$, 
i.e.\ that there is an open neighborhood $V \subseteq X'$ of $x$ such that 
$\restr{h}{(A \cup U_i)^h \cap V}$ is piecewise 
continuous. If there were some $j \neq i$ with the same property, then 
there must be some open neighborhood $W \subseteq X'$ of $x$ such that 
$\restr{h}{(A \cup U_j)^h \cap W}$ is piecewise 
continuous. But since $U_i$ and $U_j$ are strongly disjoint, this would 
imply that $\restr{h}{A^h \cap V \cap W}$ is 
piecewise continuous as well, and thus $V \cap W$ would contradict the 
fact that $x$ is $h$-irreducible outside $A$.
\end{proof}

Finally observe that if $f \colon X \to Y$ is the pointwise limit of a 
sequence  of functions $\seq{f_m \colon X \to Y}{m \in \omega}$, 
then we have the following property: if $x \in X$ and $U_0, U_1, \dotsc$ 
are pairwise disjoint open sets such that for infinitely many  $n$'s there 
is an $m$ for which $f_m(x) \in U_n$, then $f(x) \notin U_n$ for each $n$ 
(otherwise, $f_m(x) \in U_n$ for all but finitely many $m$'s contradicting 
our hypothesis).

Now we are ready  to prove Theorem \ref{theorJR}. The proof essentially uses
recursively Lemma \ref{lemmacrucial} applied to smaller and smaller subspaces
of $X$ to construct some sequences, and Lemma \ref{lemmagood} will guarantee that
at each stage the 
construction can be carried out. This is the reason for which we have proved
both the lemmas for arbitrary functions $h$ 
with domain an arbitrary subset $X'$ of $X$: in fact we will generally apply them to the restriction of the original
function $f$ to some subset of $X$, that is with $h = \restr{f}{X'}$.

\begin{proof}[Proof of Theorem \ref{theorJR}]
Assume that $f\colon X \to Y$ is of Baire class $1$ (hence also 
$\bSigma^0_2$-measurable) but not piecewise continuous, and let 
$\seq{f_n}{n \in \omega}$ be a sequence of continuous functions which 
pointwise converges to $f$. As explained on page \pageref{Ug}, we will inductively construct an open set $\hat{U} 
\subseteq Y$ and a continuous reduction $g \colon \Can \to X$ from $S = \{ z \in \Can \mid \exists i \forall j \geq i (z(j)=0)\}$
to $f^{-1}(\hat{U})$.
The function $g$ will be defined using a \emph{weak}
Cantor scheme $\seq{V_s}{s \in {}^{<\omega} 2}$
(that is a classical Cantor scheme in which we drop
the condition $V_{s \conc 0} \cap V_{s \conc 1} = \emptyset$)
 such that for every $s,t \in {}^{<\omega} 2$ we have:
\begin{enumerate}[1)]
\item $V_s$ is an open subset of $X$;
\item if $s \subsetneq 
t$ then $\Cl(V_t) \subseteq V_s$;
\item $\diam(V_s) \leq 2^{-\leng(s)}$.
\end{enumerate}
It is straightforward to check that, given such a scheme, the function $g  
\colon \Can \to X \colon z \mapsto \bigcap_{n \in \omega} 
V_{\restr{z}{n}}$ is well-defined (by the completeness of $X$) 
and continuous (in fact it is 
Lipschitz with constant $1$). 

The construction will be carried out by recursion on the rank of $s 
\in {}^{<\omega}2$ with respect to the order $\preceq$ defined by
\[ s \preceq t \iff \leng(s) < \leng(t) \vee (\leng(s) = \leng(t) \wedge 
s \leq_{\rm lex} t),\]
where $\leq_{\rm lex}$ is the usual lexicographical order on ${}^{<\omega}2$ (the strict part
of $\preceq$ will be denoted by $\prec$). 
In fact we will define, together with a scheme $\seq{V_s}{s \in {}^{<\omega}2}$ 
with the properties above, a sequence $\seq{x_s}{s \in {}^{<\omega}2}$ of points
of $X$ and a sequence $\seq{U_s}{s \in {}^{<\omega}2}$ of subsets of $Y$
such that for 
every $s \in {}^{<\omega}2$:
\begin{enumerate}[i)]
\item $x_s \in V_s$;
\item $f(x_s) \in U_s$;
\item $U_s$ is basic open and for every $t \in {}^{<\omega}2$ we have that 
$U_s$ and $U_t$ are either equal or strongly disjoint;
\item there is some $m \in \omega$ such that $f_m (V_s) \subseteq 
U_s$;
\item $x_t$ is $f$-irreducible outside $A$ for every $t \preceq s$, where 
$A=\bigcup_{u \preceq s} U_{u}$;
\item if the last digit of $s$ is $1$ then $U_s \neq U_t$ for every 
$t \prec s$ 
(and therefore, in particular, for every $t \subsetneq s$).
\end{enumerate}

As already noted, to construct these sequences we will recursively apply
Lemma \ref{lemmacrucial} to the restriction of $f$ to smaller and smaller
pieces. 

At the first stage, let $x$ and $U$ be given as in Lemma \ref{lemmacrucial}
applied to the whole $f$, 
and let $V = f_m^{-1}(U)$ where $m \in \omega$ is such that $f_m(x) \in 
U$ (such an $m$ must exists by the fact that $f$ is the limit of the 
$f_n$'s). Then put $V_\emptyset=V$, $x_\emptyset = x$ and 
$U_\emptyset = U$. Now let $s \neq \emptyset$ and suppose we have 
defined $V_t$, $x_t$ and $U_t$ for $t \prec s$. Put $s^- = \restr{s}{(\leng(s)-1)}$. If the last digit 
of $s$ is a $0$, then simply put $V_s = W$, $x_s = x_{s^-}$ and 
$U_s = U_{s^-}$, where $W$ is any open set such that $\Cl(W) 
\subseteq V_{s^-}$, $x_s \in W$ and $\diam(W) \leq 2^{-\leng(s)}$. 
Otherwise the last digit of $s$ is $1$: by the inductive hypothesis, condition v) 
implies that $h_0 = \restr{f}{A^f \cap V_{s^-}}$, where $A = \bigcup_{t
  \prec s}  U_t$,
is not piecewise continuous (otherwise, since $x_{s^- \conc 0} \in
V_{s^- \conc 0} 
\subseteq V_{s^-}$, $x_{s^- \conc 0}$
should be $f$-reducible outside $A$). 

\begin{claim}
There are $x_s \in V_{s^-}$ and $U_s \subseteq Y$ such that $f(x_s) \in
U_s$, $U_s$  
is basic open and strongly disjoint from $A$ (which in particular implies $U_s \neq
U_t$ for every 
$t \prec s$), and $x_t$ is $f$-irreducible outside $A\cup U_s$
for every  $t \preceq s$.
\end{claim}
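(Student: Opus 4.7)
The plan is to apply Lemma \ref{lemmacrucial} iteratively to produce many candidate basic open sets $U$, then use Lemma \ref{lemmagood} to pick one that preserves the $f$-irreducibility of the previously constructed $x_t$'s. First, observe that $h_0 = \restr{f}{A^f \cap V_{s^-}}$ is $\bSigma^0_2$-measurable, its range avoids $\Cl(U_t)$ for every $t \prec s$, and (as remarked immediately before the claim) it is not piecewise continuous. Lemma \ref{lemmacrucial} therefore yields $x^0 \in A^f \cap V_{s^-}$ and a basic open set $U^0 \subseteq Y$ strongly disjoint from every $U_t$ ($t \prec s$), such that $f(x^0) = h_0(x^0) \in U^0$ and $x^0$ is $h_0$-irreducible outside $U^0$. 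This pair already meets every requirement of the claim except possibly the $f$-irreducibility of $x_t$ outside $A \cup U^0$ for the (finitely many) $t \prec s$.

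To gain room to manoeuvre, I would iterate. Set $h_{i+1} = \restr{f}{(A \cup U^0 \cup \dotsb \cup U^i)^f \cap V_{s^-}}$. The $h_i$-irreducibility of $x^i$ outside $U^i$, applied with the whole domain of $h_i$ as the open neighbourhood, shows $h_{i+1}$ is not piecewise continuous, while its range is disjoint from the closure of every $U_t$ ($t \prec s$) and every $U^j$ ($j \leq i$). Applying Lemma \ref{lemmacrucial} to $h_{i+1}$ with forbidden basic open sets $\{U_t : t \prec s\} \cup \{U^0, \dotsc, U^i\}$ produces $x^{i+1}$ and a basic open set $U^{i+1}$ strongly disjoint from all of them, with $f(x^{i+1}) \in U^{i+1}$ and $x^{i+1}$ being $h_{i+1}$-irreducible outside $U^{i+1}$. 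Inductively this yields an infinite family $U^0, U^1, \dotsc$ of pairwise strongly disjoint basic open sets, each strongly disjoint from $A$.

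Finally, for each fixed $t \prec s$, Lemma \ref{lemmagood} applied to $f$, $x_t$, $A$ and the pairwise strongly disjoint family $\{U^i : i \in \omega\}$ says that $x_t$ is $f$-reducible outside $A \cup U^i$ for at most one index $i$. Since only finitely many $t \prec s$ are in play, all but finitely many indices $i$ are simultaneously good for every such $t$. Pick any such $i$ and set $x_s = x^i$, $U_s = U^i$. All requirements of the claim are then immediate except the $f$-irreducibility of $x_s$ itself outside $A \cup U_s$, which follows by unpacking the $h_i$-irreducibility of $x^i$ outside $U^i$, using that $(A \cup U^0 \cup \dotsb \cup U^i)^f \subseteq (A \cup U_s)^f$ and that every restriction of a piecewise continuous function is piecewise continuous. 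The main obstacle is producing enough candidate $U$'s to sidestep the finitely many exceptions coming from Lemma \ref{lemmagood}; the iterative use of Lemma \ref{lemmacrucial} is precisely what achieves this.
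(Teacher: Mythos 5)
Your proof is correct and follows essentially the same route as the paper: iterate Lemma \ref{lemmacrucial} on the successively restricted functions $h_{i+1} = \restr{h_i}{(U^i)^{h_i}}$ to produce a family of pairwise strongly disjoint candidate pairs $(x^i,U^i)$, each already $f$-irreducible outside $A \cup U^i$, and then invoke Lemma \ref{lemmagood} to discard the finitely many candidates that spoil the irreducibility of some earlier $x_t$. The only cosmetic difference is that you generate an infinite family of candidates and keep one of the cofinitely many good ones, whereas the paper stops after $k+2$ candidates (with $k$ the number of $t \prec s$) and concludes by the pigeonhole principle.
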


\begin{proof}[Proof of the Claim]
Let $k = |\{ t \in {}^{<\omega}2 \mid t \prec s \}  |$. Using Lemma \ref{lemmacrucial}, for $j \leq k+1$ recursively construct $x_j$ and $U_j$  such that each $x_j$ belongs to $V_{s^-}$, $f(x_j) \in U_j$, $U_j$ is strongly disjoint from $A \cup U_{<j}$ (where $U_{<j} = \emptyset$ if $j=0$ and $U_{<j} = \bigcup_{i<j}U_i$ otherwise), and $x_j$ is $h_j$-irreducible outside $A \cup U_{<j} \cup U_j$ (hence in particular $x_j$ is  $f$-irreducible outside $A \cup U_j$), where $h_0$ is as before and $h_{j+1} =\restr{ h_j}{(A \cup U_{<(j+1)})^f}$. Now notice that there must be $\bar{\jmath} \leq k+1$ such that the claim is satisfied with $x_s = x_{\bar{\jmath}}$ and $U_s = U_{\bar{\jmath}}$: if not, by the pigeonhole principle there should be $j \neq j' \leq k+1$ and $t \prec s$ such that $x_t$ is $f$-reducible both outside $A \cup U_j$ and $A \cup U_{j'}$, contradicting Lemma \ref{lemmagood}.
\renewcommand{\qedsymbol}{$\square$ \textit{Claim}}
\end{proof}

Let $W \subseteq X$ be an open neighborhood of $x_s$ such that ${\rm 
diam}(W) \leq 2^{-\leng(s)}$, $\Cl(W) \subseteq V_{s^-}$ and $f_m (W) 
\subseteq U_s$ for some $m$,  and define $V_s = W$. This completes the recursive
definition of the sequences required.

It is easy to check that the scheme $\seq{V_s}{s \in {}^{<\omega}2}$ and
the sequences $\seq{x_s}{s \in {}^{<\omega}2}$ and $\seq{U_s}{s \in {}^{<\omega}2}$
constructed in this way are as required, i.e.\ that they satisfy 1)--3) 
and i)--vi). Now put $\hat{U} = \bigcup_{s \in {}^{< \omega} 2} U_s$,
and let $g\colon \Can \to X$ be obtained from $\seq{V_s}{s \in {}^{<\omega}2}$ 
as described 
above.  We have only to check that $g$ is a reduction of $S$ to 
$f^{-1}(\hat{U})$. Let $\seq{U_k}{k \in \omega}$
be an enumeration without repetitions of $\seq{U_s}{s \in {}^{<\omega}2}$, 
so that by condition iii) the $U_k$'s are pairwise 
disjoint and $\hat{U} = \bigcup_{k \in \omega} 
U_k$. If $z \in S$, then for some $\bar{n} \in \omega$ 
we will have that $x_{\restr{z}{m}} = x_{\restr{z}{\bar{n}}} = \bar{x}$ 
for every $m \geq \bar{n}$, therefore $g(z) = \bar{x}$ and $f(g(z)) = 
f(\bar{x}) \in U_{\restr{z}{\bar{n}}} \subseteq \hat{U}$. Assume now 
$z \notin S$: by conditions vi) and iv), for infinitely many $k$'s there
is some $m \in \omega$ such that $f_m(g(z)) \in U_k$ (since $g(z) \in 
V_{\restr{z}{n}}$ for every $n \in \omega$), and therefore $f(g(z))
\notin \hat{U}$ by the observation preceding this proof.
\end{proof}

\end{document}